\renewcommand{\geq}{\geqslant}
\newtheorem*{thm}{Theorem}
\newtheorem*{lem}{Lemma}
\newtheorem*{cor*}{Corollary}
\begin{document}
\title{Centralizers of torsion in 3-manifold groups}

\author{Jonathan A. Hillman }
\address{School of Mathematics and Statistics\\
     University of Sydney, NSW 2006\\
      Australia }

\email{jonathanhillman47@gmail.com}

\begin{abstract}
We show that if $P$ is a $PD_3$-complex
and $g\in\pi_1(P)$ has finite order $>1$ and infinite centralizer
then $\pi_1(P)$ retracts onto $Z/2Z\oplus\mathbb{Z}$.
If $P$ is an irreducible closed 3-manifold it then follows from 
the Projective Plane Theorem that $P\cong{RP^2}\times{S^1}$.
\end{abstract}

\keywords{centralizer, non-orientable, $PD_3$-complex,  projective plane}

\maketitle
The focus of most research on manifolds has been on the orientable case.
Here we shall point out a curiosity from the realm of non-orientable 
3-manifolds that seems not to have been noticed before.
We shall show that if $M$ is an irreducible closed 3-manifold 
such that $\pi_1(M)$ has an element $g\not=1$ of finite order and with infinite centralizer then $M$ is homeomorphic to ${RP}^2\times{S^1}$.

Our argument is mostly on the broader level of $PD_3$-complexes.
Crisp has shown that if $P$ is a $PD_3$-complex and $\pi=\pi_1(P)$ has 
an element $g\not=1$ of finite order and with infinite centralizer $C_\pi(g)$ 
then $g^2=1$,  
$g$ is orientation-reversing and $C_\pi(g)$ has two ends
\cite[Theorem 17]{Cr}.
(This can be sharpened to give
$C_\pi(g)\cong\langle{g}\rangle\times\mathbb{Z}$ \cite[Corollary 7.10.2]{Hi}.)
Crisp's argument proved more than it asserted,
and we shall extend it to show that $\pi$ retracts onto 
$Z/2Z\times\mathbb{Z}$.
In the 3-manifold case we have further tools,
namely the Projective Plane Theorem and its consequences,
and the assertion in the first paragraph is then a corollary of our main result.

At the end we give alternative arguments due to J. S. Crisp 
(for the $PD_3$ result) and to G. A. Swarup (for the application to 3-manifolds).
I thank them for allowing me to include outlines of their arguments.

\section{the main result}

We begin with an algebraic lemma.

\begin{lem}
Let $K$ be a finitely generated group and let $G$ be the HNN extension 
with presentation $\langle{K,t}\mid{tgt^{-1}=j}\rangle$,
where $g^2=j^2=1$,
$C_K(g)=\langle{g}\rangle$ and $C_K(j)=\langle{j}\rangle$.
Suppose that $g$ and $j$ are not conjugate in $K$.
Then $C_G(g)=\langle{g}\rangle$ (and $C_G(j)=\langle{j}\rangle$).
\end{lem}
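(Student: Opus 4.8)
The plan is to compute $C_G(g)$ using the normal form theorem for HNN extensions (Britton's Lemma), with associated subgroups $A=\langle g\rangle=\{1,g\}$ and $B=\langle j\rangle=\{1,j\}$ and stable letter satisfying $tgt^{-1}=j$, so that $tAt^{-1}=B$. Given $w\in C_G(g)$, I would write it in reduced form $w=k_0t^{\epsilon_1}k_1\cdots t^{\epsilon_n}k_n$ with $k_i\in K$ and $\epsilon_i=\pm1$, and argue on the $t$-length $n$. The case $n=0$ is immediate: then $w\in K$ centralizes $g$, so $w\in C_K(g)=\langle g\rangle$. The whole content is to rule out $n\geq1$, and for this I would apply Britton's Lemma to the word representing $w^{-1}gw$ together with the hypothesis $w^{-1}gw=g$.

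For $n\geq1$, form the word $w^{-1}gw=k_n^{-1}t^{-\epsilon_n}\cdots k_1^{-1}t^{-\epsilon_1}(k_0^{-1}gk_0)t^{\epsilon_1}k_1\cdots t^{\epsilon_n}k_n$, of $t$-length $2n$. The key preliminary observation is that, since $w$ is reduced, none of the outer pairs of adjacent $t$'s is a pinch, so the only place a reduction can begin is the central pair $t^{-\epsilon_1}(k_0^{-1}gk_0)t^{\epsilon_1}$; and as $w^{-1}gw=g$ has $t$-length $0$, this central pair must pinch. If $\epsilon_1=+1$ the pinch requires $k_0^{-1}gk_0\in B$, hence $k_0^{-1}gk_0=j$, contradicting that $g$ and $j$ are not conjugate in $K$. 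Thus $\epsilon_1=-1$; the pinch then requires $k_0^{-1}gk_0\in A$, whence $k_0^{-1}gk_0=g$, so $k_0\in C_K(g)=\langle g\rangle$, and the substitution $tgt^{-1}=j$ converts the centre into $k_1^{-1}jk_1$.

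I would then iterate on the shorter word $k_n^{-1}t^{-\epsilon_n}\cdots t^{-\epsilon_2}(k_1^{-1}jk_1)t^{\epsilon_2}\cdots t^{\epsilon_n}k_n$, whose only admissible pinch is again the central one. If $\epsilon_2=-1$ the pinch would force $k_1^{-1}jk_1=g$, a forbidden conjugacy; so $\epsilon_2=+1$ and the pinch forces $k_1^{-1}jk_1=j$, i.e. $k_1\in C_K(j)=\langle j\rangle$. But then the subword $t^{\epsilon_1}k_1t^{\epsilon_2}=t^{-1}k_1t$ of $w$ has middle term in $B$ with pattern $(-,+)$, which is itself a pinch, contradicting the reducedness of $w$. (When $n=1$ the analogous step reads $k_1^{-1}jk_1=g$ outright, again forbidden.) Hence no reduced $w$ of $t$-length $\geq1$ centralizes $g$, so $C_G(g)\subseteq K$ and therefore $C_G(g)=\langle g\rangle$. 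The claim $C_G(j)=\langle j\rangle$ follows by the symmetric argument, reading the relation as $t^{-1}jt=g$ and interchanging $g$ and $j$.

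I expect the main obstacle to be the bookkeeping that keeps the iteration honest: one must verify at each stage that reducedness of $w$ leaves the central pair as the unique site where a pinch can occur, and then recognise that the condition forcing the terminal pinch—an intermediate coset element lying in an associated subgroup with alternating exponents—is precisely a pinch of the original word $w$. Keeping the sign conventions in Britton's Lemma consistent with the relation $tgt^{-1}=j$ is the other place where care is needed.
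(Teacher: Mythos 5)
Your proposal is correct and takes essentially the same approach as the paper: both proofs are a two-level case analysis driven by the normal form theorem for HNN extensions, yielding the same contradictions (forbidden conjugacy of $g$ and $j$, or a violation of reducedness). The only difference is cosmetic --- the paper pushes $g$ through a transversal normal form and invokes uniqueness, while you apply Britton's Lemma to $w^{-1}gw$ and locate pinches from the centre outward; your $\epsilon_2=+1$ contradiction (the subword $t^{-1}k_1t$ with $k_1\in\langle{j}\rangle$ being a pinch) is exactly the paper's violation of the condition $s_i\not=1$ when consecutive exponents alternate.
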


\begin{proof}
Let $S_+$ and $S_-$ be left transversals for $\langle{j}\rangle$ 
and $\langle{g}\rangle$ in $K$ (respectively), 
chosen so that $1\in{S_-\cap{S_+}}$.
Each element in the given HNN extension has an unique normal form
$s_0t^{\varepsilon_0}s_1\dots{t^{\varepsilon_{n-1}}}k$,
where $\varepsilon_i=\pm1$, 
$s_i\in{S_+}$ if $\varepsilon_i=1$,
$s_i\in{S_-}$ if $\varepsilon_i=-1$, 
$s_i\not=1$ if 
$\varepsilon_{i-1}=-\varepsilon_i$, 
and $k\in{K}$.
If there is such an element of length $n>0$ which commutes with $g$ then
\[
gs_0t^{\varepsilon_0}s_1\dots{t^{\varepsilon_{n-1}}}k=
s_0t^{\varepsilon_0}s_1\dots{t^{\varepsilon_{n-1}}}kg.
\]
The right hand element is in normal form.
Since $g\not=1$, the element on the left cannot be in normal form
(by the uniqueness of normal forms).
If $\varepsilon_0=1$ then we must have $gs_0=s'j$ for some $s'\in{S_+}$.
But then the normal form for the left hand element would begin $s't\dots$.
Hence $s'=s_0$ and $g=s_0js_0^{-1}$, 
which contradicts one of the hypotheses.
If $\varepsilon_0=-1$ then we must have $gs_0=s''g$ for some $s''\in{S_-}$.
We again see that we must have $s''=s_0$ and so $gs_0=s_0g$.
Hence $s_0=1$, since $C_K(g)=\langle{g}\rangle$,
and we may cancel $t^{\varepsilon_0}=t^{-1}$ from each side,.
If $n=1$ we get $jk=kg$, and so $j=kgk^{-1}$, contrary to hypothesis. 
If $n>1$ we get
\[
js_1t^{\varepsilon_1}\dots{t^{\varepsilon_{n-1}}}k=
s_1t^{\varepsilon_1}\dots{t^{\varepsilon_{n-1}}}kg.
\]
The right hand side is again in normal form.
If $\varepsilon_1=1$ then we must have $js_1=s'j$ for some $s'\in{S_+}$.
But then $s'=s_1$, so $s_1$ commutes with $j$.
Since $S_+\cap{C_K(j)}=1$, we get $s_1=1$.
This is impossible,  since $\varepsilon_0=-\varepsilon_1$.
Therefore $\varepsilon_1=-1$, and so $js_1=s''g$ for some $s''\in{S_-}$.
But then $s''=s_1$, by the uniqueness of normal forms, 
and so $g$ and $j$ are conjugate,  
which is again a contradiction.

Thus $C_G(\langle{g}\rangle)=C_K(\langle{g}\rangle)=\langle{g}\rangle$
(and likewise $C_G(\langle{j}\rangle)=\langle{j}\rangle$).
\end{proof}

Let $P$ be a non-orientable $PD_3$-complex and let $\pi=\pi_1(P)$. 
Let $P^+$ be the orientable double covering space with
$\pi_1(P^+)=\pi^+=\mathrm{Ker}(w)$,
where $w:\pi\to\mathbb{Z}^\times$ is the orientation character.
If $G$ is a subgroup of $\pi$ let $G^+=G\cap\pi^+$.

\begin{thm}
Let $\pi=\pi_1(P)$, where $P$ is a $PD_3$-complex,
and suppose that $g\in\pi$ has finite order and infinite centralizer $C=C_\pi(g)$.
Then $\pi\cong\rho*_{Z/2Z}(Z/2Z\oplus\mathbb{Z})$,
for some subgroup $\rho$, and $\pi$ retracts onto $Z/2Z\oplus\mathbb{Z}$.
\end{thm}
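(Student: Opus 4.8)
The plan is to read off the conclusion from Crisp's structural analysis together with Bass--Serre theory, using the algebraic Lemma above to control the edge and vertex groups. By \cite[Theorem 17]{Cr} and its sharpening \cite[Corollary 7.10.2]{Hi} the hypotheses already force $g$ to have order $2$, to be orientation-reversing (so $w(g)=-1$), and to have centralizer $C=C_\pi(g)\cong\langle g\rangle\times\langle t\rangle\cong Z/2Z\oplus\mathbb{Z}$, where $t$ has infinite order and commutes with $g$. Thus $C$ is exactly the proposed free factor $Z/2Z\oplus\mathbb{Z}$ and $\langle g\rangle$ the proposed amalgamating subgroup, and the whole task reduces to producing a complementary subgroup $\rho\supseteq\langle g\rangle$ with $\pi\cong\rho*_{\langle g\rangle}C$; the retraction will then be essentially free.

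First I would show that $\pi$ has more than one end. Since $P$ is a $PD_3$-complex, $\pi_2(P)\cong H^1(\pi;\mathbb{Z}[\pi])$; were $\pi$ one-ended this group would vanish, forcing $\pi_2(P)=0$ and hence, as $\pi$ is infinite, asphericity of $P$. But then $\pi$ would be a torsion-free $PD_3$-group, contradicting the involution $g$. Therefore $H^1(\pi;\mathbb{Z}[\pi])\neq0$, so $\pi$ has at least two ends, and since $\pi$ is finitely presented, Stallings' theorem (with Dunwoody accessibility to keep the decomposition finite) presents $\pi$ as the fundamental group of a finite graph of groups with finite edge groups. Let $T$ be the associated Bass--Serre tree, on which $\pi$ acts with finite edge stabilizers.

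Next I would locate $g$, $t$ and $C$ in $T$. The finite group $\langle g\rangle$ fixes a vertex, so its fixed subtree $\mathrm{Fix}(g)$ is nonempty, and it is $C$-invariant because $C$ centralizes $g$. The two-ended group $C$ acts on this subtree, so either $C$ fixes a point or $C$ preserves a line $L$ along which $t$ translates while $g$ fixes $L$ pointwise; in the latter case every edge of $L$ has stabilizer containing $g$ and meeting $C$ in exactly $\langle g\rangle$, and passing to the loop $L/\langle t\rangle$ exhibits $t$ as a stable letter associated to involutions conjugate to $g$. \emph{This is the main obstacle}: I must show that the splitting can be organized so that the edge group is precisely $\langle g\rangle$ and one vertex group is precisely $C$, i.e.\ that $C$ is not buried inside a larger one-ended vertex group and that the stable letter genuinely fixes $g$ rather than conjugating it to a non-conjugate involution. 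Here the Lemma is decisive in its contrapositive form: an HNN presentation $\langle K,t\mid tgt^{-1}=j\rangle$ over non-conjugate involutions forces $C_\pi(g)=\langle g\rangle$, which is finite. Since our centralizer is infinite, that configuration is excluded, $t$ must preserve $g$, and the splitting refines to $\pi\cong\rho*_{\langle g\rangle}C$ with $t\in C\setminus\rho$; the identity $C_\pi(g)=C$ then forces $C_\rho(g)=\langle g\rangle$, consistent with the normal-form computation of the centralizer in an amalgam (indeed this last point is the content of the Lemma's positive assertion).

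Finally, the retraction is immediate from the amalgam. The orientation character $w\colon\pi\to\mathbb{Z}^\times=\{\pm1\}$ satisfies $w(g)=-1$, so under the identification $\langle g\rangle\cong\{\pm1\}$ sending $g\mapsto-1$ the restriction $w|_\rho\colon\rho\to\langle g\rangle$ is the identity on $\langle g\rangle$, that is, a retraction of $\rho$ onto the amalgamating subgroup. The homomorphisms $w|_\rho\colon\rho\to\langle g\rangle\leq C$ and $\mathrm{id}_C\colon C\to C$ agree on $\langle g\rangle$, so by the universal property of the amalgamated product they define a homomorphism $\pi\to C$ restricting to the identity on $C=Z/2Z\oplus\mathbb{Z}$. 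This is the required retraction, completing the proof.
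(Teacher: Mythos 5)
Your outline gets the preliminaries and the endgame right: the inputs from \cite[Theorem 17]{Cr} and \cite[Corollary 7.10.2]{Hi} (order $2$, $w(g)=-1$, $C\cong Z/2Z\oplus\mathbb{Z}$) are the correct starting point, your argument that $\pi$ has more than one end is sound, and your retraction (gluing $w|_\rho$ with $\mathrm{id}_C$ by the universal property of the amalgam) is essentially the paper's own. You have also correctly identified where the real difficulty lies. But your resolution of that difficulty has two genuine gaps, and they are exactly the places where the paper works hardest.

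First, starting from a Stallings--Dunwoody decomposition with finite edge groups, nothing in your argument prevents $C$ from fixing a vertex of the tree, i.e.\ from being swallowed by a one-ended vertex group; your dichotomy (``either $C$ fixes a point or $C$ preserves a line $L$'') simply leaves the first branch open, and Bass--Serre formalism cannot close it. Indeed, the purely group-theoretic data you use after the preliminaries do not imply the conclusion: let $G=K*\mathbb{Z}$ with $K=\mathbb{Z}^2\rtimes Z/2Z$, the involution $g$ acting by $(x,y)\mapsto(x,-y)$. Then $G$ is finitely presented with infinitely many ends and $C_G(g)=C_K(g)\cong Z/2Z\oplus\mathbb{Z}$, yet $G$ is not isomorphic to any $\rho*_{Z/2Z}(Z/2Z\oplus\mathbb{Z})$: in such an amalgam the one-ended subgroup $K$ would lie in a conjugate of $\rho$, hence so would the centralizer of every involution (all involutions being conjugate into $K$), whereas the centralizer of the amalgamating involution contains the infinite-order generator of the second vertex group, which lies in no conjugate of $\rho$. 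What excludes this configuration for $PD_3$-complexes is duality, not Stallings' theorem: the paper starts from the decomposition of \cite[Theorem 7.10]{Hi} (order-$2$ edge groups, one-ended vertex groups, $\pi^+$ torsion-free) and, crucially, from the actual content of Crisp's argument, namely that $g$ stabilizes an edge $e$ of a terminal $\pi$-tree while some $h\in C$ translates along a line through $e$. That fact both puts $\langle g\rangle$ on an edge and kills the amalgamated-product alternative (a translation lies in no vertex group); it cannot be recovered from the mere existence of a splitting over finite subgroups.

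Second, your appeal to the Lemma ``in contrapositive form'' is vacuous as stated, because you never verify its remaining hypotheses. The Lemma gives $C_G(g)=\langle g\rangle$ only under the assumptions $C_K(g)=\langle g\rangle$ \emph{and} $C_K(j)=\langle j\rangle$. If, say, $C_K(j)$ is infinite, then $C_G(g)=t^{-1}C_G(j)t\supseteq t^{-1}C_K(j)t$ is infinite regardless of whether $g$ and $j$ are conjugate in $K$, so infiniteness of the centralizer excludes nothing. The core of the paper's proof is the machinery that manufactures these hypotheses: when $C_A(j)\neq\langle j\rangle$ one passes to the vertex group $A$ (a retract of $\pi$), re-runs the splitting argument inside $A$, and invokes accessibility of $FP_2$ groups to guarantee that this finite induction terminates in an HNN extension $K*_E\psi$ with $C_K(E)=E$ and $C_K(\psi(E))=\psi(E)$. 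Only then does the Lemma force $j=k^{-1}gk$; one must then replace the stable letter by $u=kt$ (note that $t$ itself need not commute with $g$, so ``$t$ must preserve $g$'' is not quite right), giving $G\cong K*_EC_G(E)$ and finally $\pi\cong\rho*_EC_G(E)$. Without the descent through retracts and the accessibility bound, your application of the Lemma does not get off the ground.
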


\begin{proof}
We may assume that $P$ is indecomposable.
The element $g$ has order 2 and $w(g)=-1$, 
so $P$ is non-orientable \cite{Cr}.
If $P\simeq{RP^2}\times{S^1}$ there is nothing to prove, 
for then $\pi\cong{Z/2Z\oplus\mathbb{Z}}$.
Otherwise, $\pi=\pi\mathcal{G}$ where $(\mathcal{G},\Gamma)$
is a finite graph of groups in which each vertex group is $FP_2$ 
and has one end,
the edge groups are all of order 2, 
$\pi^+=\mathrm{Ker}(w_1(P))$ is torsion-free
and $\pi\cong\pi^+\rtimes{Z/2Z}$ \cite[Theorem 7.10]{Hi}.

The argument of Crisp's Centralizer Theorem  \cite[Theorem 17]{Cr}
shows more than it asserts:
in fact $g$ stabilizes an edge $e$ of a terminal $\pi$-tree $T$, 
and an element $h\in{C}$ acts as a translation along a line through $e$.
After contracting edges not in the $\pi$-orbit of $e$, 
we obtain a $\pi$-tree $U$ which is a union of translates of $e$, 
and so either $\pi\cong{A*_{\langle{g}\rangle}B}$ with $|A|,|B|>2$
or $\pi\cong{A*_{\langle{g}\rangle}\varphi}$ where $j=\varphi(g)\in{A}$.

If $\pi\cong{A*_{\langle{g}\rangle}B}$ then $\langle{g}\rangle$ 
must be properly contained in its centralizer in one of $A$ or $B$,
since $C$ is generated by $C_A(g)\cup{C_B(g)}$,
by the Normal Form Theorem.
Since $C\cong\langle{g}\rangle\oplus\mathbb{Z}$
\cite[Corollary 7.10.2]{Hi},
it  is properly contained in just one of these.
But since $h$ is a translation it is not in any vertex group
\cite[I.4.11]{DD}.
Therefore we must have 
$\pi\cong{A*_{\langle{g}\rangle}\varphi}$.

If $C_A(j)\not=\langle{j}\rangle$ then $j$ must stabilize some edge $f$ 
of $T$ and there is an element $h\in{C_A(j)}$ which acts 
as a translation along $f$.
The subgroup $A$ is also the quotient 
$\pi/\langle\langle{B^+}\rangle\rangle$,
and thus is a retract of $\pi$.
We may apply the earlier argument to  $A$ and $j$, instead of $\pi$ and $g$.
The rest of our present argument involves a finite induction, 
and does not use the hypothesis that the ambient group $\pi$ 
is the fundamental group of a $PD_3$-complex.
All the intermediate groups arising en route have the form
$\pi\mathcal{H}$, where $(\mathcal{H},\Theta)$ is a graph of groups 
with $\Theta$ a non-empty connected subgraph of $\Gamma$,
vertex groups $H_v=G_v$ for all $v\in{V(\Theta)}$
and all edge groups of order 2.

Since $\pi$ is $FP_2$ it is accessible, 
as are subgroups such as $A$,
and so after a finite number of iterations
we reach a group $G$ which is a retract of $\pi$,
and which is an HNN extension $K*_E\psi$,
where $E$ is an edge group,  $C_K(E)=E$, 
$C_K(\psi(E))=\psi(E)$ and $C_G(E)\cong{Z/2Z}\oplus\mathbb{Z}$.
Let $t$ be the stable letter for this HNN extension.
We may now apply the lemma, with $\langle{g}\rangle=E$ and $j=\psi(g)$.
We conclude that $j=k^{-1}gk$ for some $k\in{K}$.
Let $u=kt$.
Then $ug=gu$ and $G$ has the HNN presentation
$\langle{K,u}\mid{ugu^{-1}=g}\rangle$.
Hence $G\cong{K}*_EC_G(E)$.
It follows easily that $\pi\cong\rho*_EC_G(E)$, for some subgroup $\rho$.
This clearly retracts onto $C_G(E)$,
since $\rho/\rho^+\cong{Z/2Z}$ and so
$\pi/\langle\langle\rho^+\rangle\rangle\cong{C_G(E)}$.
\end{proof}

If $M$ is a closed irreducible 3-manifold then there is
a maximal finite family $\mathcal{P}$ of pairwise non-parallel, 
2-sided copies of $RP^2$ embedded in $M$.
If $M\not=RP^2\times{S^1}$ then the orientation covers of 
the components of $M\setminus\cup\mathcal{P}$
are punctured aspherical 3-manifolds.
This decomposition gives rise to a graph-of-groups structure for $\pi$
in which each edge group has order 2 and each vertex group has one end. 
(See \cite{Ep,He,Swa}.)

\begin{cor*}
Let $M$ be an irreducible closed $3$-manifold such that $\pi_1(M)$ 
has an element of finite order $>1$ and infinite centralizer.
Then $M\cong{RP^2\times{S^1}}$.
\end{cor*}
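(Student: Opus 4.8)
The plan is to feed $M$ into the Theorem and then use the projective-plane decomposition recalled above to force the topology. A closed irreducible $3$-manifold is a $PD_3$-complex, so the Theorem applies directly to $\pi=\pi_1(M)$: the given torsion element $g$ has order $2$ and reverses orientation, by Crisp's theorem \cite{Cr}, its centralizer is $C\cong{Z/2Z}\oplus\mathbb{Z}$ \cite{Hi}, and $\pi\cong\rho*_{Z/2Z}(Z/2Z\oplus\mathbb{Z})$. If $M\cong{RP^2}\times{S^1}$ we are done, so I would assume $M\not\cong{RP^2}\times{S^1}$ and seek a contradiction.

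Under this assumption the discussion preceding the corollary furnishes a graph-of-groups structure $(\mathcal{G},\Gamma)$ for $\pi$ in which every edge group has order $2$ and every vertex group has one end. This is exactly the situation handled by the inductive part of the proof of the Theorem, which, as noted there, nowhere uses the ambient $PD_3$ hypothesis; so I would apply that argument verbatim to $(\mathcal{G},\Gamma)$. It shows that $g$ stabilizes an edge $e$ of the Bass--Serre tree, that the infinite-order generator $h$ of $C$ acts as a translation along a line through $e$ and hence lies in no vertex group \cite{DD}, and therefore that the relevant edge of $\Gamma$ is a loop. The accessibility and retraction argument then yields a retract $G\cong{K}*_{\langle{g}\rangle}C_G(\langle{g}\rangle)$ with $C_G(\langle{g}\rangle)\cong{Z/2Z}\oplus\mathbb{Z}$, and globally $\pi\cong\rho*_{\langle{g}\rangle}(Z/2Z\oplus\mathbb{Z})$: a splitting over the edge group $\langle{g}\rangle=\pi_1(RP^2)$ with one vertex group two-ended.

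It remains to show that this two-ended vertex group cannot coexist with a genuinely non-trivial one-ended decomposition; equivalently, that $\rho=\langle{g}\rangle$, so that $\pi\cong{Z/2Z}\oplus\mathbb{Z}$ and $M\cong{RP^2}\times{S^1}$, the unique irreducible closed $3$-manifold carrying this group. The edge group $\langle{g}\rangle$ is carried by an embedded $2$-sided projective plane, and the translation $h$ supplies a transverse circle direction. I would realise the algebraic splitting geometrically, following Swarup \cite{Swa}, so as to exhibit this $RP^2$ as a fibre of a fibration of $M$ over $S^1$; since the mapping class group of $RP^2$ is trivial, the bundle is the product $RP^2\times{S^1}$, and irreducibility together with the maximality of $\mathcal{P}$ shows that it exhausts $M$. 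This last passage — upgrading the group-theoretic amalgam over $\langle{g}\rangle$ to an embedded fibre $RP^2$, and using the maximality of $\mathcal{P}$ to exclude a proper amalgam — is the main obstacle, and it is precisely where the Projective Plane Theorem and Swarup's realisation of splittings by embedded surfaces do the essential work.
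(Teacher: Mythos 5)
Your setup is fine: applying the Theorem to $\pi_1(M)$ and invoking the $RP^2$-decomposition is exactly how the paper begins, and the splitting $\pi\cong\rho*_{Z/2Z}(Z/2Z\oplus\mathbb{Z})$ is correctly in hand. But the final step --- showing $\rho=\langle{g}\rangle$ --- is precisely where your argument stops being a proof, and you concede as much (``the main obstacle''). Worse, the route you sketch for it cannot work as stated. A fibration of $M$ over $S^1$ with fibre $RP^2$ is the geometric counterpart of an HNN splitting of $\pi$ over $Z/2Z$, i.e.\ of the conclusion $\pi\cong{Z/2Z}\oplus\mathbb{Z}$ itself; what you actually have is an amalgamated free product, whose geometric realization is a \emph{separating} two-sided $RP^2$, not a fibre. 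So ``exhibit this $RP^2$ as a fibre'' presupposes $\rho=\langle{g}\rangle$, which is what you are trying to prove: the argument is circular, and no appeal to Swarup's realization of splittings by embedded surfaces, nor to the maximality of $\mathcal{P}$, will convert a proper amalgam into a fibration.

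The idea your proposal is missing is much simpler: \emph{$RP^2$ bounds no compact $3$-manifold} (the boundary of a compact $3$-manifold has even Euler characteristic, while $\chi(RP^2)=1$). The paper argues as follows. If $M\not\cong{RP^2}\times{S^1}$, then $M$ is not even homotopy equivalent to it \cite{Ta,To}, so $\pi\not\cong{Z/2Z}\oplus\mathbb{Z}$ and the amalgam $\rho*_{Z/2Z}(Z/2Z\oplus\mathbb{Z})$ is proper; realizing this splitting along one of the projective planes of $\mathcal{P}$ cuts $M$ into two compact pieces, each with boundary a single $RP^2$, which is impossible. Thus there is no need to prove $\rho=\langle{g}\rangle$ by geometric means: the proper amalgam is killed outright, and the degenerate case $\pi\cong{Z/2Z}\oplus\mathbb{Z}$ is handled by the rigidity statement you asserted without justification (``the unique irreducible closed $3$-manifold carrying this group''), which itself rests on \cite{Ta} and \cite{To}.
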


\begin{proof}
We may use the graph-of-groups structure for $\pi$ determined by
copies of $RP^2$ in $M$.
Suppose that $M$ is not homeomorphic to $RP^2$.
Then it is not homotopy equivalent to $RP^2$ \cite{Ta,To},
and so it splits along a 2-sided embedded $RP^2$ into two pieces 
(corresponding to the decomposition of $\pi$ as a 
generalized free product of $\rho$ and $Z/2Z\oplus\mathbb{Z}$ 
with amalgamation over $Z/2Z$), 
each with boundary $RP^2$. 
But this is impossible, since $RP^2$ does not bound.
\end{proof}

The argument for the corollary does not extend to $PD_3$-complexes,
since there is as yet no result comparable to the Projective Plane Theorem, 
allowing splitting along 2-sided copies of $RP^2$.

\smallskip
\noindent{\bf Question 1.} {\sl If $P$ is an indecomposable $PD_3$-complex 
and $\pi=\pi_1(P)\cong\rho*_{Z/2Z}(Z/2Z\oplus\mathbb{Z})$
must $P\simeq{RP^2}\times{S^1}$?}

One possible approach to this question might be to apply the Turaev Criterion and its consequences \cite{Tu}. (See also \cite[Corollary 2.4.1]{Hi}.)

A more ambitious approach, putting this curiosity into a wider context,
would be to extend Turaev's Splitting Theorem,
which  establishes one of the major consequences 
of the  Sphere Theorem for 3-manifolds.
We might hope that the following holds.

\smallskip
\noindent{\bf Question 2.} {\sl Let $P$ be a $PD_3$-complex 
with fundamental group $\pi=\pi_1(P)$ and orientation character $w=w_1(P)$.
If $\pi$ splits over a subgroup $C$ such that $w|_C$ is injective then 
$P$ is either a connected sum or splits over a two-sided copy of $RP^2$.}

This is so when $C$ is trivial, by Turaev's Splitting Theorem \cite{Tu}. 
(See also \cite[Theorem 2.7]{Hi}.)

A relative version is desirable for inductive arguments.

\smallskip
\noindent{\bf Question 3.} {\sl Let $(X,\partial{X})$ be a $PD_3$-pair 
such that $\pi^+$ is torsion-free,
and let $b:\partial{X}\to{X}$ be the inclusion of the boundary.
Then $\mathrm{Cok}(\pi_2(b))\not=0$ if and only if either $X$ is a proper connected sum, $X$ splits over a two-sided copy of $RP^2$, 
or $\pi$ has two ends.}

If $P$ is a $PD_3$-complex such that $\pi_1(P)$ is infinite 
and has non-trivial torsion then $\pi_2(P)\not=0$, 
and so a positive answer to Question 3 would confirm Question 2 also.

We note finally that if $P$ is an indecomposable $PD_3$-complex and
$g\in\pi=\pi_1(P)$ has centralizer $C$ of finite order $>2$ then 
$P$ is orientable and $C$ is cyclic or has cohomological period 4.
In the 3-manifold case $\pi$ must be finite, 
but if $C$ is any finite group of cohomological period 4  
(and hence of even order $\geq6$)
and $D_{2m}$ is the dihedral group of order $2m$ then 
$C*_{Z/2Z}D_{2m}$ is the fundamental group of a $PD_3$-complex.
(See \cite[Chapters 6 and 7]{Hi}.)

\newpage
\section{responses}

J.  S. Crisp has suggested an alternative formulation for the Lemma,
that is consistently in terms of groups acting on trees, 
and avoids normal form arguments.

Let $U$ be a $G$-tree with one orbit of edges, 
corresponding to the HNN extension.
Fix a base edge $e$ with vertices $t^{-1}(v)$ and $v$, 
and with $Stab(e)=\langle{g}\rangle$.
Then $Stab(t(e))=\langle{tgt^{-1}}\rangle=\langle{j}\rangle$.
Let $A=Stab(v)$ and $C=C_G(g)$.
The subgraph $T=Fix(g)$ is a $C$-tree, 
with edges in bijection with the cosets of $Stab(e)$ in $C$.
If $T$ has more than one edge then it has another edge $e'$ 
which is adjacent to $e$.
We may write $e'=z(e)$, for some $z\in{C}\setminus\langle{g}\rangle$.
Then there are four possibilities
\begin{enumerate}

\item $z(v)=v$: so $z\in{C_A(g)}$;

\item $z(t^{-1}(v))=t^{-1}(v)$: so $tzt^{-1}\in{C_A(j)}$;

\item $z(t^{-1}(v))=v$: so $w=zt^{-1}\in{A}$ and  $wgw^{-1}=j$;
or 

\item $z(v)=t^{-1}(v)$: similar.
\end{enumerate}
In each case one of the hypotheses is contradicted.
Thus $T$ must have just one edge, 
so $C=Stab(e)=\langle{g}\rangle$.

He has also suggested a simplification of the proof of the theorem,
avoiding the lemma entirely.

Let $P$ be a $PD_3$-complex and $\pi=\pi_1(P)$,
and let $X$ be a terminal $\pi$-tree.
Suppose that $X$ has an edge $e$ such that $Stab(e)=\langle{g}\rangle$,
where $g$ has order 2 and $C_\pi(g)/\langle{g}\rangle\cong\mathbb{Z}$.
Then $Fix(g)$ is an infinite line connecting two ends of $X$
 \cite[Theorem 17]{Cr}. 
(See also \cite[Theorem 4.9]{Hi}.)
We may contract $X$ onto the $\pi$-orbit of $e$ to obtain a $\pi$-tree $T$ with an edge-transitive action, and $Fix(g)$ embeds in $T$.
If $f=h(e)$ is an edge in $Fix(g)$ then $gh=hg$ and so $h\in{C_\pi(g)}$.
Since we may assume that $e$ and $f$ are adjacent,
 $C_\pi(g)$ acts by unit translations, 
and so the HNN structure for $\pi$ has a stable letter  $u\in{C_\pi(g)}$.
The theorem now follows, as above.

G.  A.  Swarup has pointed out that the 3-manifold Corollary follows 
from the Compact Core Theorem of Scott and the fact
that if a compact non-orientable 3-manifold 
$N$ has finite fundamental group then 
$N\simeq{RP^2\times[0,1]\setminus{F}}$, 
for some finite subset $F$ \cite{Ep}.

Let $M$ be an irreducible closed 3-manifold such that $\pi=\pi_1(M)$ 
has a subgroup $H\cong{Z/2Z}\oplus\mathbb{Z}$,
and let $M_H$ the the associated covering space.
Then $M_H$ has a compact submanifold $L$ with $\pi_1(L)\cong{H}$,
by the Compact Core Theorem.
The element of order 2 in $H$ is orientation-reversing,
by Crisp's Centralizer Theorem.
We may assume that $\partial{L}$ is incompressible.
It contains no 2-spheres, since $M$ is irreducible,  and can have no aspherical components, since $\pi$ is too small.
Hence $\partial{L}\cong{m}RP^2$, for some $m\geq0$.
Let $L^+$ be the orientable double cover of $L$.
Then $\pi_1(L^+)\cong\mathbb{Z}$ and $\partial{L^+}\cong{m}S^2$.
Let $\widehat{L}=L^+\cup{m}D^3$ be the closed 3-manifold 
obtained by capping off the components of $\partial{L^+}$.
The  covering involution of $L^+$ over $L$ extends 
to an orientation inversing involution $\tau$ of $\widehat{L}$ 
with $m$ isolated fixed points,  all of local index 1.
Hence $m=Lef(\tau)=2(1-\mathrm{tr}\, H_1(\tau))$,
by the Lefschetz-Hopf Fixed Point Theorem.
But $\mathrm{tr}\, H_1(\tau)=1$, and so $m=0$.
Thus $L$ is closed,  so $H$ has finite index in $\pi$ and $\pi$ has two ends.
Hence $\pi\cong\mathbb{Z}$, 
$Z/2Z\oplus\mathbb{Z}$ or $D_\infty=Z/2Z*Z/2Z$.
Clearly $\pi\cong{Z/2Z}\oplus\mathbb{Z}$ is the only possibility,
and so $M\cong{RP^2}\times{S^1}$, as in the Corollary above.

%\newpage

\end{document}